\newtheorem{theorem}{Theorem}
\newtheorem{lemma}{Lemma}
\theoremstyle{remark} 
\newtheorem*{remark}{Remark}
\newtheorem*{interpretation}{Geometric Interpretation}
\title{On a BBP-type formula for $\pi^2$ in the golden ratio base}
\author{B. Cloitre}
\date{July 31, 2025}
\begin{document}

\maketitle

\begin{abstract}
This paper presents a detailed, self-contained proof of a BBP-type formula for $\pi^2$ expressed in the golden ratio base, $\phi$. The formula was discovered empirically by the author in 2004. The proof presented herein is built upon a fundamental geometric identity connecting $\phi$ to the fifth roots of unity, offering an intuitive and direct path to the result. The power of the underlying methodology is then demonstrated by extending it to establish a new, computationally efficient Machin-like formula for $\zeta(3)$, expressed through rapidly converging, hierarchical series involving the golden ratio.
\end{abstract}

\section{Introduction and Context}

The search for explicit formulas for mathematical constants has long been a source of fascination. A significant breakthrough in this area came with the discovery of "BBP-type" formulas, named after Bailey, Borwein, and Plouffe \cite{BBP97}. A BBP formula for a constant $C$ is a series representation of the form:
\[ C = \sum_{k=0}^{\infty} \frac{1}{b^k} \sum_{j=1}^{n} \frac{a_j}{(nk+j)^s} \]
where $b$ is an integer base and $a_j$ are integers. Their primary importance lies in their ability to allow the computation of the $d$-th digit of $C$ in base $b$ without computing the preceding digits. The archetypal example is the formula for $\pi$:
\[ \pi = \sum_{k=0}^{\infty} \frac{1}{16^k} \left( \frac{4}{8k+1} - \frac{2}{8k+4} - \frac{1}{8k+5} - \frac{1}{8k+6} \right) \]
The original discovery for $\pi$ sparked a flurry of research. This led to many new BBP-type formulas, first for other weight-1 constants, and later for constants of higher weight, such as $\zeta(3)$ or Catalan's constant, as extensively documented in Bailey's compendium \cite{BaileyCompendium}. A parallel line of inquiry explored the existence of such formulas in non-integer algebraic bases. The golden ratio base, $\phi=(1+\sqrt{5})/2$, proved to be a particularly fertile ground, yielding formulas for $\pi$ and other weight-1 constants \cite{Chan08}.

This paper addresses a question at the confluence of these two research streams: do BBP-type formulas exist for constants of higher weight in the golden ratio base? It is in this context that in December 2004, while experimenting with the \texttt{lindep} algorithm in PARI/GP, I discovered the novel BBP-type formula for $\pi^2$—a constant of weight 2—that is the subject of this paper. While, as noted, numerous formulas for weight-1 constants have been found in the golden ratio base, those for constants of higher weight remain exceptionally rare. The formula for $\pi^2$ was subsequently featured as an exercise in Experimental Mathematics in Action  \cite{Bailey07}, where the hints provided are sufficient to construct a proof by specializing a general polylogarithmic identity.

In contrast, the demonstration provided herein offers a more fundamental and intuitive perspective. It originates from a single geometric identity (Lemma \ref{lem:geom_id}), which, through the properties of the dilogarithm, establishes a direct connection between a remarkable feature of the regular pentagon and the formula for $\pi^2$. The entire proof is thus constructed from first principles, highlighting the underlying algebraic structure in a self-contained way.

Finally, the investigation was extended to $\zeta(3)$, a constant of weight 3. While the experimental search for a pure BBP-type formula for $\zeta(3)$ in the golden ratio base did not succeed, it led to the discovery of a different, noteworthy result: a non-linear Machin-like formula for $\zeta(3)$. This identity, which expresses $\zeta(3)$ as a polynomial of rapidly converging series in terms of $\phi$, is established here as a valuable and computationally efficient result in its own right.

\section{Mathematical Preliminaries}

To make this paper self-contained, this section gathers the key definitions and identities used.

\subsection{The Polylogarithm Function}
The polylogarithm function, whose study was initiated by Euler and later developed by Landen and Lewin, provides a bridge between series, integrals, and special values of functions like the Riemann zeta function. For instance, $\mathrm{Li}_2(1) = \sum_{k=1}^\infty 1/k^2 = \zeta(2) = \pi^2/6$. These functions also appear in various contexts in theoretical physics, such as in the calculation of Feynman diagrams.

The polylogarithm of order $s$, denoted $\mathrm{Li}_s(z)$, is defined for any complex number $z$ with $|z|<1$ by the power series:
\[
\mathrm{Li}_s(z) = \sum_{k=1}^{\infty} \frac{z^k}{k^s}
\]
For $s=2$, this is the \textit{dilogarithm}, and for $s=3$, the \textit{trilogarithm}. 

\begin{lemma}[Functional Equations for the Dilogarithm]
For $|z| \le 1$ and $z$ not $0$ or $1$, the dilogarithm satisfies the following functional equations, which are standard results found in \cite{Lewin81}:
\begin{enumerate}
    \item \textbf{Landen's Identity (Reflection Formula):}
    \begin{equation}
    \mathrm{Li}_2(z) + \mathrm{Li}_2(1-z) = \frac{\pi^2}{6} - \ln(z)\ln(1-z)
    \label{eq:landen}
    \end{equation}
    \item \textbf{Inversion Formula:}
    \begin{equation}
    \mathrm{Li}_2(z) + \mathrm{Li}_2(1/z) = -\frac{\pi^2}{6} - \frac{1}{2}\ln^2(-z)
    \label{eq:inversion}
    \end{equation}
\end{enumerate}
Here, $\ln(z)$ denotes the principal branch of the complex logarithm.
\end{lemma}

\subsection{The Golden Ratio and the Regular Pentagon}
The golden ratio, $\phi = (1+\sqrt{5})/2$, is deeply connected to the geometry of the regular pentagon. The vertices of a regular pentagon centered at the origin can be represented by the fifth roots of unity, $\omega^k = e^{2\pi i k/5}$ for $k=0,1,2,3,4$. The geometry of this figure dictates the exact trigonometric values needed for our proof. For instance, the ratio of a diagonal to a side in a regular pentagon is $\phi$. This property can be used to derive the following standard results \cite{MathWorldPentagon}:
\[
\cos(2\pi/5) = \frac{\phi-1}{2} = \frac{1}{2\phi} \quad \text{and} \quad \cos(\pi/5) = \frac{\phi}{2}
\]
The point $z = \phi^{-1}e^{2\pi i/5}$, which is central to our proof, lies within this geometric context, as shown in Figure \ref{fig:pentagon}.

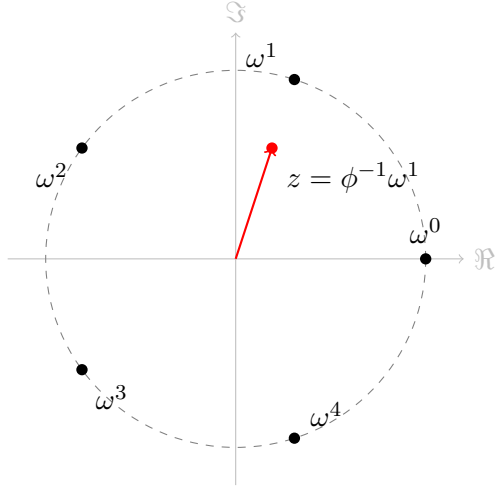
\begin{figure}[h!]
\centering
\begin{tikzpicture}[scale=2.5]
    \draw[->, gray!50] (-1.2,0) -- (1.2,0) node[right] {$\Re$};
    \draw[->, gray!50] (0,-1.2) -- (0,1.2) node[above] {$\Im$};
    \draw[dashed, gray] (0,0) circle (1);
    \foreach \k in {0,...,4} {
        \node[fill, circle, inner sep=1.5pt, label={90+72*\k:$\omega^{\k}$}] at ({72*\k}:1) {};
    }
    \def\phiinv{0.618034}
    \node[fill=red, circle, inner sep=1.5pt, label={-45:$z=\phi^{-1}\omega^1$}] at (72:\phiinv) {};
    \draw[->, red, thick] (0,0) -- (72:\phiinv);
\end{tikzpicture}
\caption{The fifth roots of unity $\omega^k=e^{2\pi i k/5}$ on the unit circle and the key point $z$ in the complex plane.}
\label{fig:pentagon}
\end{figure}

\section{The main results}

This paper establishes two main theorems.
\begin{theorem}[BBP formula for $\pi^2$]
Let $\phi = \frac{1+\sqrt{5}}{2}$ be the golden ratio. The following identity holds:
\begin{equation}
\frac{\pi^2}{50} = \sum_{k=0}^{\infty} \frac{1}{\phi^{5k}} \left( \frac{\phi^{-2}}{(5k+1)^2} - \frac{\phi^{-1}}{(5k+2)^2} - \frac{\phi^{-2}}{(5k+3)^2} + \frac{\phi^{-5}}{(5k+4)^2} + \frac{2\phi^{-5}}{(5k+5)^2} \right)
\label{eq:bbp_formula_main}
\end{equation}
\end{theorem}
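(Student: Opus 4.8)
The plan is to collapse the right-hand side of \eqref{eq:bbp_formula_main} into the real part of a single dilogarithm and then evaluate that dilogarithm by means of the functional equation \eqref{eq:landen}. First I would absorb the factor $\phi^{-5k}$ into the numerators and re-index the double sum by $n=5k+j$, so that each summand takes the shape $c_j\,\phi^{-n}/n^2$ with
\[
c_1=\phi^{-1},\quad c_2=-\phi,\quad c_3=-\phi,\quad c_4=\phi^{-1},\quad c_5=2 .
\]
The five residue classes $n\equiv 1,2,3,4,0 \pmod 5$ exactly tile the positive integers, so the whole right-hand side equals $\sum_{n\ge 1} c_{\,n\bmod 5}\,\phi^{-n}/n^2$. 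The crucial observation is that, in view of the pentagon values $\cos(2\pi/5)=\tfrac{1}{2\phi}$ and $\cos(4\pi/5)=-\tfrac{\phi}{2}$ (together with $\cos(6\pi/5)=\cos(4\pi/5)$ and $\cos(8\pi/5)=\cos(2\pi/5)$), these coefficients are precisely $c_j=2\cos(2\pi j/5)$.

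Consequently the right-hand side becomes
\[
\sum_{n=1}^\infty \frac{2\cos(2\pi n/5)\,\phi^{-n}}{n^2}
= 2\,\mathrm{Re}\sum_{n=1}^\infty \frac{(\phi^{-1}e^{2\pi i/5})^{n}}{n^2}
= 2\,\mathrm{Re}\,\mathrm{Li}_2(z),
\qquad z=\phi^{-1}e^{2\pi i/5},
\]
the interchange of $\mathrm{Re}$ with the summation being legitimate because $|z|=\phi^{-1}<1$ yields absolute convergence. Thus the theorem reduces to the single evaluation $\mathrm{Re}\,\mathrm{Li}_2(z)=\pi^2/100$.

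To compute this I would invoke the geometric identity of Lemma \ref{lem:geom_id}, namely $z=\phi^{-1}\omega = 1+\omega^2$, which follows from $2\cos(2\pi/5)\,e^{2\pi i/5}=1+e^{4\pi i/5}$. Its value for the argument is that $1-z=-\omega^2=e^{-i\pi/5}$ lies \emph{on} the unit circle. Applying Landen's reflection formula \eqref{eq:landen} at $z$ and taking real parts gives
\[
\mathrm{Re}\,\mathrm{Li}_2(z)=\frac{\pi^2}{6}-\mathrm{Re}\!\big[\ln(z)\ln(1-z)\big]-\mathrm{Re}\,\mathrm{Li}_2(e^{-i\pi/5}).
\]
The logarithmic term is elementary once I record the principal-branch data $\ln(z)=-\ln\phi+i\,\tfrac{2\pi}{5}$ and $\ln(1-z)=-i\,\tfrac{\pi}{5}$, whence $\mathrm{Re}[\ln(z)\ln(1-z)]=\tfrac{2\pi^2}{25}$. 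For the remaining term I would use the closed form of the dilogarithm on the unit circle, $\mathrm{Re}\,\mathrm{Li}_2(e^{i\theta})=\sum_{n\ge1}\cos(n\theta)/n^2=\tfrac{\pi^2}{6}-\tfrac{\pi\theta}{2}+\tfrac{\theta^2}{4}$ for $0\le\theta\le 2\pi$ (the Fourier expansion of the Bernoulli polynomial $B_2$), evaluated at $\theta=\pi/5$ to obtain $\tfrac{23\pi^2}{300}$. Substituting the three pieces yields $\mathrm{Re}\,\mathrm{Li}_2(z)=\tfrac{\pi^2}{6}-\tfrac{2\pi^2}{25}-\tfrac{23\pi^2}{300}=\tfrac{\pi^2}{100}$, and multiplying by $2$ gives exactly $\pi^2/50$.

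I expect the genuine obstacles to be twofold. The first is the bookkeeping of branches: Landen's formula holds with principal logarithms only if the arguments of $z$ and $1-z$ remain in the principal range, so I must verify $\arg z=2\pi/5\in(-\pi,\pi]$ and $\arg(1-z)=-\pi/5\in(-\pi,\pi]$, the latter being guaranteed precisely by the identity $1-z=e^{-i\pi/5}$. The second, consistent with the self-contained aim of the paper, is to supply a short independent derivation of the unit-circle closed form $\mathrm{Re}\,\mathrm{Li}_2(e^{i\theta})=\tfrac{\pi^2}{6}-\tfrac{\pi\theta}{2}+\tfrac{\theta^2}{4}$ rather than merely citing it; everything else amounts to the finite trigonometric verification of the first step and routine arithmetic.
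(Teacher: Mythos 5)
Your proposal is correct and follows essentially the same route as the paper: the same mod-5 dissection identifying the BBP sum with $\sum_{n\ge 1} 2\cos(2\pi n/5)\phi^{-n}/n^2 = 2\,\Re\,\mathrm{Li}_2(\phi^{-1}e^{2\pi i/5})$, the same key identity $1-z=e^{-i\pi/5}$, and the same evaluation via Landen's formula together with the Clausen/Bernoulli expansion of $\Re\,\mathrm{Li}_2(e^{i\theta})$ and the principal-branch logarithm computation. The only cosmetic differences are that you run the dissection in the reverse direction (collapsing the BBP sum rather than expanding the cosine sum) and work with $\Re\,\mathrm{Li}_2(z)$ directly instead of $\mathrm{Li}_2(z)+\mathrm{Li}_2(\bar z)$; all intermediate constants ($\tfrac{2\pi^2}{25}$, $\tfrac{23\pi^2}{300}$, $\tfrac{\pi^2}{100}$) match the paper's.
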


\begin{theorem}[Machin-like formula for $\zeta(3)$]
Let $\phi = \frac{1+\sqrt{5}}{2}$. The constant $\zeta(3)$ can be expressed as the following polynomial of rapidly converging series:
\[
\zeta(3) = \frac{5}{4}M_3(\phi^{-2}) + \Big( M_2(\phi^{-1}) + M_2(\phi^{-2}) \Big) M_1(\phi^{-2}) + \frac{7}{6} \Big( M_1(\phi^{-2}) \Big)^3
\]
where the constants $M_s(w)$ are defined by hierarchical sums designed for rapid convergence:
\[
M_s(w) = \sum_{j=0}^{\infty} 5^{-sj} \left( \sum_{r=1}^{4}\sum_{k=0}^{\infty} \frac{(w^{5^j})^{5k+r}}{(5k+r)^s} \right)
\]
This series converges for $|w| < 1$, which is the case for the arguments $\phi^{-1}$ and $\phi^{-2}$ used here.
\end{theorem}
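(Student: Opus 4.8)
The plan is to first collapse the hierarchical sums $M_s(w)$ into ordinary polylogarithms, and then reduce the claimed identity to a single polylogarithmic ladder relation at the golden ratio. The crucial first observation concerns the inner double sum defining $M_s$. Writing $u = w^{5^j}$, the sum over $r \in \{1,2,3,4\}$ and $k \ge 0$ collects precisely those indices $n = 5k+r$ not divisible by $5$, so that
\[
\sum_{r=1}^{4}\sum_{k=0}^{\infty}\frac{u^{5k+r}}{(5k+r)^s} = \mathrm{Li}_s(u) - 5^{-s}\mathrm{Li}_s(u^5) = \mathrm{Li}_s\!\left(w^{5^j}\right) - 5^{-s}\mathrm{Li}_s\!\left(w^{5^{j+1}}\right).
\]
Substituting this back and setting $a_j = 5^{-sj}\mathrm{Li}_s(w^{5^j})$, the defining series for $M_s(w)$ becomes $\sum_{j\ge 0}(a_j - a_{j+1})$, a telescoping sum. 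Since $|w|<1$ forces $w^{5^j}\to 0$ and hence $a_j\to 0$, I conclude $M_s(w) = a_0 = \mathrm{Li}_s(w)$. Thus the $M_s$ are nothing but rapidly convergent accelerations of the polylogarithm, and the theorem is equivalent to the closed-form ladder
\[
\zeta(3) = \tfrac{5}{4}\mathrm{Li}_3(\phi^{-2}) + \big(\mathrm{Li}_2(\phi^{-1}) + \mathrm{Li}_2(\phi^{-2})\big)\mathrm{Li}_1(\phi^{-2}) + \tfrac{7}{6}\,\mathrm{Li}_1(\phi^{-2})^3 .
\]

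Next I would evaluate each ingredient using elementary golden-ratio algebra together with the dilogarithm functional equations already available. The single identity $1-\phi^{-2}=\phi^{-1}$ does most of the work. First, $\mathrm{Li}_1(\phi^{-2}) = -\ln(1-\phi^{-2}) = -\ln(\phi^{-1}) = \ln\phi$. Second, applying Landen's reflection formula \eqref{eq:landen} at $z=\phi^{-2}$, for which $1-z=\phi^{-1}$, gives
\[
\mathrm{Li}_2(\phi^{-2}) + \mathrm{Li}_2(\phi^{-1}) = \frac{\pi^2}{6} - \ln(\phi^{-2})\ln(\phi^{-1}) = \frac{\pi^2}{6} - 2\ln^2\phi,
\]
so the entire dilogarithmic factor is determined without needing the individual values. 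The only remaining input is the trilogarithm value at $\phi^{-2}$, namely the classical golden-ratio ladder
\[
\mathrm{Li}_3(\phi^{-2}) = \frac{4}{5}\zeta(3) - \frac{2\pi^2}{15}\ln\phi + \frac{2}{3}\ln^3\phi .
\]

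With these three evaluations in hand, the proof finishes by direct substitution: writing $L=\ln\phi$, the three terms contribute $\zeta(3) - \tfrac{\pi^2}{6}L + \tfrac{5}{6}L^3$, then $\tfrac{\pi^2}{6}L - 2L^3$, then $\tfrac{7}{6}L^3$, whose sum is exactly $\zeta(3)$ since the $\pi^2 L$ terms cancel and the coefficient of $L^3$ is $\tfrac{5}{6}-2+\tfrac{7}{6}=0$. The main obstacle is the trilogarithm value itself: unlike the dilogarithm, no trilogarithm functional equation is supplied in the preliminaries, so to keep the argument self-contained I would have to derive this ladder from a trilogarithm reflection/Landen identity. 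The delicate point is that the natural identity applied at $x=\phi^{-2}$ introduces the argument $1-1/x = -\phi$, which lies outside the unit disk; handling this requires analytic continuation and careful tracking of the logarithmic branch, and this branch bookkeeping—rather than any of the golden-ratio algebra—is where the real effort lies. Alternatively, mirroring the paper's geometric method, one could evaluate a trilogarithm identity at the pentagon point $z=\phi^{-1}e^{2\pi i/5}$ and extract its real part, which produces the $\mathrm{Li}_3$, $\mathrm{Li}_2\cdot\mathrm{Li}_1$, and $\mathrm{Li}_1^3$ structure directly.
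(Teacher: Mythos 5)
Your proposal is correct and follows the same overall skeleton as the paper: collapse $M_s$ to $\mathrm{Li}_s$, then reduce the theorem to the golden-ratio polylogarithm ladder, with the $\mathrm{Li}_3(\phi^{-2})$ evaluation taken as the one external input. Two of your intermediate steps differ from the paper in ways worth noting. For $M_s(w)=\mathrm{Li}_s(w)$, the paper re-indexes the full series via the unique factorization $n=m\cdot 5^j$ with $5\nmid m$, whereas you write the inner sum as $\mathrm{Li}_s(u)-5^{-s}\mathrm{Li}_s(u^5)$ and telescope; both are valid, and you correctly supply the needed limit $5^{-sj}\mathrm{Li}_s(w^{5^j})\to 0$. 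More substantively, for the combination $\mathrm{Li}_2(\phi^{-1})+\mathrm{Li}_2(\phi^{-2})=\tfrac{\pi^2}{6}-2\ln^2\phi$ the paper's Appendix A quotes the two individual Lewin evaluations \eqref{eq:a1}--\eqref{eq:a2} and adds them, while you derive the sum in one line from Landen's reflection formula \eqref{eq:landen} at $z=\phi^{-2}$ using $1-\phi^{-2}=\phi^{-1}$; since that functional equation is already in the paper's preliminaries, your route is genuinely more self-contained for this ingredient. Your final bookkeeping (the cancellation of the $\pi^2\ln\phi$ terms and of the $\ln^3\phi$ coefficient $\tfrac{5}{6}-2+\tfrac{7}{6}=0$) checks out, and runs the paper's derivation in reverse: the paper isolates $\zeta(3)$ from the trilogarithm ladder and substitutes forward, while you substitute the three evaluations into the claimed right-hand side and verify it collapses to $\zeta(3)$ --- logically equivalent. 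Your candid assessment that the trilogarithm ladder is the genuinely hard, non-self-contained step matches the paper, which likewise defers its proof to Lewin.
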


\section{Proof of the $\pi^2$ formula (theorem 1)}

The proof is presented in two stages: we first prove a core geometric identity and its consequence for a polylogarithm series, and then we demonstrate that the BBP formula is an algebraic rearrangement of this result.

\subsection{The Geometric and Polylogarithmic Foundation}

\begin{lemma}[A key geometric identity]
Let $\phi = (1+\sqrt{5})/2$ and $z = \phi^{-1}e^{2\pi i/5}$. Then, the following identity holds: $1-z = e^{-i\pi/5}$.
\label{lem:geom_id}
\end{lemma}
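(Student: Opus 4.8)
The plan is to verify the identity by direct computation, showing that $1-z$ has modulus $1$ and argument $-\pi/5$. The most transparent route exploits the roots-of-unity structure already present in the paper. Writing $\omega = e^{2\pi i/5}$ so that $z = \phi^{-1}\omega$, I would first observe that the target value $e^{-i\pi/5}$ is itself a signed fifth root of unity: since $-\omega^2 = e^{i\pi}e^{4\pi i/5} = e^{9\pi i/5} = e^{-i\pi/5}$, the claim $1-z = e^{-i\pi/5}$ is equivalent to the purely algebraic statement $1 - \phi^{-1}\omega = -\omega^2$.

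Rearranging, this is $1 + \omega^2 = \phi^{-1}\omega$, and dividing through by $\omega$ (which is nonzero) reduces the whole identity to $\omega + \omega^{-1} = \phi^{-1}$. The left-hand side is $2\cos(2\pi/5)$, and by the pentagon value recorded in the preliminaries, $2\cos(2\pi/5) = 2\cdot\frac{1}{2\phi} = \phi^{-1}$. This closes the argument. The appeal of this route is that it converts the geometric assertion into the single known fact about $\cos(2\pi/5)$, with no further trigonometry required.

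As a cross-check, and for readers who prefer Cartesian coordinates, I would also carry out the direct verification of the real and imaginary parts. Separating $1 - z = \left(1 - \phi^{-1}\cos(2\pi/5)\right) - i\,\phi^{-1}\sin(2\pi/5)$, the real part becomes $1 - \frac{1}{2\phi^2}$; using the golden-ratio relation $\phi^2 = \phi+1$ (equivalently $\phi^{-2} = 2-\phi$) this simplifies to $\phi/2 = \cos(\pi/5)$. For the imaginary part one needs $\phi^{-1}\sin(2\pi/5) = \sin(\pi/5)$, which follows from the double-angle identity $\sin(2\pi/5) = 2\sin(\pi/5)\cos(\pi/5)$ together with $\cos(\pi/5) = \phi/2$. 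Thus $1-z = \cos(\pi/5) - i\sin(\pi/5) = e^{-i\pi/5}$.

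I do not anticipate a genuine obstacle here: the statement is an exact algebraic identity among fifth roots of unity, and the only points requiring care are the bookkeeping of signs and arguments—in particular correctly identifying $e^{-i\pi/5}$ with $-\omega^2$ rather than $-\omega^3$, and confirming that the factor relating $\sin(2\pi/5)$ to $\sin(\pi/5)$ is exactly $\phi = 2\cos(\pi/5)$. Everything else reduces to the minimal polynomial of $\phi$ and the single trigonometric value supplied in Section 2.
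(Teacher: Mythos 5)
Your proposal is correct, and your secondary ``cross-check'' via real and imaginary parts is essentially the paper's own proof: the paper verifies $\Re(1-z)=\phi/2=\cos(\pi/5)$ using $\phi^{-2}=1-\phi^{-1}$, and $\Im(1-z)=-\sin(\pi/5)$ via the double-angle formula together with $\cos(\pi/5)=\phi/2$, exactly as you do. Your primary route, however, is genuinely different and arguably cleaner: by recognizing $e^{-i\pi/5}=-\omega^{2}$ with $\omega=e^{2\pi i/5}$ (since $e^{i\pi}e^{4\pi i/5}=e^{9\pi i/5}=e^{-i\pi/5}$), you reduce the lemma to the algebraic identity $1+\omega^{2}=\phi^{-1}\omega$, and hence, after dividing by $\omega$, to the single input $\omega+\omega^{-1}=2\cos(2\pi/5)=\phi^{-1}$. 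This buys economy---one trigonometric fact instead of two (the paper needs both $\cos(2\pi/5)=1/(2\phi)$ and $\cos(\pi/5)=\phi/2$, plus the double-angle identity)---and it makes transparent that the lemma is really a statement in the cyclotomic field $\mathbb{Q}(\omega)$, where $\phi^{-1}$ appears as $\omega+\omega^{4}$. What the paper's Cartesian computation buys in exchange is explicit contact with the pentagon's metric data (the length-$1$ segment at angle $-\pi/5$), which is the geometric picture the paper wants to emphasize. Both arguments are complete and correct, and your sign bookkeeping (in particular $e^{-i\pi/5}=-\omega^{2}$ rather than $-\omega^{3}$) checks out.
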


\begin{proof}
We prove the identity by showing that the real and imaginary parts of both sides are equal.
First, the real part of $1-z$:
\begin{align*}
\Re(1-z) &= 1 - \phi^{-1}\cos(2\pi/5) = 1 - \phi^{-1}\left(\frac{1}{2\phi}\right) = 1 - \frac{\phi^{-2}}{2} \\
&= 1 - \frac{1-\phi^{-1}}{2} = \frac{2 - 1 + \phi^{-1}}{2} = \frac{1+\phi^{-1}}{2} = \frac{\phi}{2}
\end{align*}
This is precisely $\cos(\pi/5) = \Re(e^{-i\pi/5})$.
Next, the imaginary part of $1-z$:
\[ \Im(1-z) = -\phi^{-1}\sin(2\pi/5) \]
Using the double-angle formula $\sin(2\theta) = 2\sin\theta\cos\theta$:
\begin{align*}
\phi^{-1}\sin(2\pi/5) &= \phi^{-1} \left(2\sin(\pi/5)\cos(\pi/5)\right) \\
&= \phi^{-1} \left(2\sin(\pi/5)\frac{\phi}{2}\right) = (\phi^{-1}\phi)\sin(\pi/5) = \sin(\pi/5)
\end{align*}
Thus, $\Im(1-z) = -\sin(\pi/5)$. Since the real and imaginary parts match, the identity is proven.
\end{proof}

\begin{interpretation}
This identity expresses a remarkable metric relationship within the geometry of the pentagon. In the complex plane, the numbers $0, 1, z$ form a triangle. The identity $1-z = e^{-i\pi/5}$ states that the vector from $z$ to $1$ has length $1$ and makes an angle of $-\pi/5$ with the real axis. This specific configuration is a direct consequence of the golden ratio's role in the pentagon's construction.
\begin{figure}[h!]
\centering
\begin{tikzpicture}[scale=3]
    \draw[->, gray!50] (-0.2,0) -- (1.2,0) node[right] {$\Re$};
    \draw[->, gray!50] (0,-0.6) -- (0,0.6) node[above] {$\Im$};
    \node[fill, circle, inner sep=1pt] at (0,0) (O) {}; \node[below left] at (O) {$0$};
    \node[fill, circle, inner sep=1pt] at (1,0) (one) {}; \node[below] at (one) {$1$};
    \def\phiinv{0.618034}
    \coordinate (z) at (72:\phiinv);
    \node[fill=red, circle, inner sep=1pt] at (z) {}; \node[above] at (z) {$z$};
    \draw[->, thick] (O) -- (one);
    \draw[->, red, thick] (O) -- (z);
    \draw[->, blue, thick] (z) -- (one) node[midway, below] {$1-z$};
    \coordinate (res) at (-36:1);
    \draw[->, blue, thick, dashed] (O) -- (res) node[midway, below right] {$e^{-i\pi/5}$};
\end{tikzpicture}
\caption{Geometric visualization of the identity $1-z = e^{-i\pi/5}$.}
\label{fig:geom_id}
\end{figure}
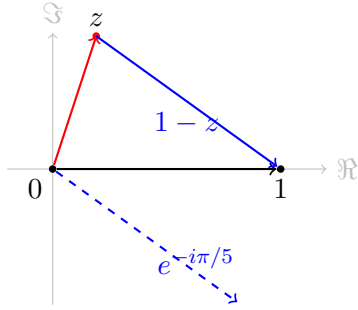
\end{interpretation}

\begin{lemma}
Let $\phi = \frac{1+\sqrt{5}}{2}$. The following identity holds:
\begin{equation}
\sum_{k=1}^{\infty} \frac{\cos(2\pi k/5)}{\phi^k k^2} = \frac{\pi^2}{100}
\label{eq:cos_sum}
\end{equation}
\end{lemma}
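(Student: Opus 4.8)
The plan is to recognize the sum as the real part of a dilogarithm and then exploit the geometric identity of Lemma \ref{lem:geom_id} together with Landen's reflection formula. Since $\cos(2\pi k/5) = \Re(e^{2\pi i k/5})$, I would write
\[
\sum_{k=1}^{\infty} \frac{\cos(2\pi k/5)}{\phi^k k^2} = \Re\left( \sum_{k=1}^{\infty} \frac{(\phi^{-1}e^{2\pi i/5})^k}{k^2} \right) = \Re\bigl(\mathrm{Li}_2(z)\bigr),
\]
where $z = \phi^{-1}e^{2\pi i/5}$ is exactly the point from Lemma \ref{lem:geom_id}. The interchange of $\Re$ with the infinite sum is justified by absolute convergence, since $|z| = \phi^{-1} < 1$. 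Thus the entire problem reduces to evaluating the single quantity $\Re(\mathrm{Li}_2(z))$.

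Next I would apply Landen's identity \eqref{eq:landen}, valid here because $|z| \le 1$ and $z \neq 0,1$, to obtain
\[
\mathrm{Li}_2(z) = \frac{\pi^2}{6} - \ln(z)\ln(1-z) - \mathrm{Li}_2(1-z).
\]
The crucial simplification comes from substituting $1-z = e^{-i\pi/5}$ (Lemma \ref{lem:geom_id}), which turns $\mathrm{Li}_2(1-z)$ into a dilogarithm evaluated on the unit circle. Its real part is a classical Fourier series: using $\Re(\mathrm{Li}_2(e^{i\theta})) = \sum_{k=1}^{\infty} \cos(k\theta)/k^2 = \pi^2/6 - \pi\theta/2 + \theta^2/4$ for $0 \le \theta \le 2\pi$ (equivalently the Fourier expansion of the Bernoulli polynomial $B_2$), and noting that $\Re(\mathrm{Li}_2(e^{-i\pi/5})) = \Re(\mathrm{Li}_2(e^{i\pi/5}))$ by conjugation, I would evaluate $\Re(\mathrm{Li}_2(1-z))$ at $\theta = \pi/5$ to obtain $23\pi^2/300$.

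It then remains to take the real part of the logarithmic term. Writing $\ln(z) = -\ln\phi + i\,2\pi/5$ and $\ln(1-z) = -i\pi/5$ on the principal branch, the product expands so that the $\ln\phi$ contribution lands entirely in the imaginary part and drops out, leaving real part $2\pi^2/25$. Assembling the three pieces then gives
\[
\Re(\mathrm{Li}_2(z)) = \frac{\pi^2}{6} - \frac{2\pi^2}{25} - \frac{23\pi^2}{300} = \frac{\pi^2}{100},
\]
which is the claim. The main obstacle is the bookkeeping with the complex logarithm: one must verify that the principal-branch arguments of $z$ and $1-z$ are exactly $2\pi/5$ and $-\pi/5$, so that no spurious $2\pi i$ correction enters the product $\ln(z)\ln(1-z)$, and confirm that Landen's formula remains applicable at the point $1-z$, which lies on the boundary $|1-z| = 1$. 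Everything else is routine arithmetic with rational multiples of $\pi^2$.
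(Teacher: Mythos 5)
Your proposal is correct and follows essentially the same route as the paper: the paper's proof works with $2S = \mathrm{Li}_2(z) + \mathrm{Li}_2(\bar z)$, which is just twice the real part you compute directly, and otherwise uses the identical ingredients (Lemma \ref{lem:geom_id}, Landen's identity \eqref{eq:landen}, the Clausen/Fourier evaluation of $\Re\,\mathrm{Li}_2(e^{i\pi/5}) = 23\pi^2/300$, and the same principal-branch logarithm values). Your intermediate constants and the final arithmetic all match the paper's.
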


\begin{proof}
Our strategy is to express the target sum using dilogarithms and then use a functional equation to evaluate it. We choose the specific complex number $z = \phi^{-1}e^{2\pi i/5}$ because its real part isolates the desired cosine term, its modulus is $\phi^{-1}$, and, crucially, the term $1-z$ simplifies dramatically due to Lemma \ref{lem:geom_id}. This simplification makes Landen's identity, which connects $\mathrm{Li}_2(z)$ and $\mathrm{Li}_2(1-z)$, the ideal tool for the evaluation.

Let $z = \phi^{-1}e^{2\pi i/5}$ and its conjugate $\bar{z} = \phi^{-1}e^{-2\pi i/5}$. We want to evaluate the sum $S = \sum_{k=1}^{\infty} \frac{\cos(2\pi k/5)}{\phi^k k^2}$. This sum can be expressed using the dilogarithm as:
\[ 2S = \sum_{k=1}^{\infty} \frac{2\cos(2\pi k/5)}{\phi^k k^2} = \sum_{k=1}^{\infty} \frac{z^k + \bar{z}^k}{k^2} = \mathrm{Li}_2(z) + \mathrm{Li}_2(\bar{z}) \]
Our goal is to show that this sum equals $\pi^2/50$. We use Landen's Identity \eqref{eq:landen} and our key result from Lemma \ref{lem:geom_id}, $1-z = e^{-i\pi/5}$ (which implies $1-\bar{z} = e^{i\pi/5}$):
\begin{align*}
\mathrm{Li}_2(z) + \mathrm{Li}_2(\bar{z}) &= \left(\frac{\pi^2}{6} - \mathrm{Li}_2(1-z) - \ln(z)\ln(1-z)\right) + \left(\frac{\pi^2}{6} - \mathrm{Li}_2(1-\bar{z}) - \ln(\bar{z})\ln(1-\bar{z})\right) \\
2S &= \frac{\pi^2}{3} - \Big(\mathrm{Li}_2(e^{-i\pi/5}) + \mathrm{Li}_2(e^{i\pi/5})\Big) - \Big(\ln(z)\ln(1-z) + \ln(\bar{z})\ln(1-\bar{z})\Big)
\end{align*}
We evaluate the two parenthesized terms separately. First, the dilogarithm term is $2\Re(\mathrm{Li}_2(e^{i\pi/5}))$. The real part of $\mathrm{Li}_2(e^{i\theta})$ is the Clausen function, which has the known series representation $\sum \frac{\cos(k\theta)}{k^2} = \frac{\pi^2}{6} - \frac{\pi|\theta|}{2} + \frac{\theta^2}{4}$ for $0 \le |\theta| \le 2\pi$. For $\theta=\pi/5$, this term becomes:
\[ 2 \times \left(\frac{\pi^2}{6} - \frac{\pi(\pi/5)}{2} + \frac{(\pi/5)^2}{4}\right) = 2\pi^2\left(\frac{1}{6} - \frac{1}{10} + \frac{1}{100}\right) = 2\pi^2\left(\frac{50-30+3}{300}\right) = \frac{23\pi^2}{150} \]
Second, for the logarithm term, we use $\ln(z) = -\ln\phi + i\frac{2\pi}{5}$, $\ln(1-z) = -i\frac{\pi}{5}$, and their conjugates. The sum is:
\[ \left(-\ln\phi + i\frac{2\pi}{5}\right)\left(-i\frac{\pi}{5}\right) + \left(-\ln\phi - i\frac{2\pi}{5}\right)\left(i\frac{\pi}{5}\right) = \left(\frac{2\pi^2}{25} + i\frac{\pi\ln\phi}{5}\right) + \left(\frac{2\pi^2}{25} - i\frac{\pi\ln\phi}{5}\right) = \frac{4\pi^2}{25} \]
Substituting these results back into the expression for $2S$:
\[ 2S = \frac{\pi^2}{3} - \frac{23\pi^2}{150} - \frac{4\pi^2}{25} = \frac{50\pi^2 - 23\pi^2 - 24\pi^2}{150} = \frac{3\pi^2}{150} = \frac{\pi^2}{50} \]
Therefore, $S = \pi^2/100$, which completes the proof.
\end{proof}

\subsection{Derivation of the BBP formula}
Let $\mathcal{T} = \sum_{k=1}^{\infty} \frac{\cos(2\pi k/5)}{\phi^k k^2} = \frac{\pi^2}{100}$, as proven in Lemma 3. 
To transform this cosine series into a BBP-type formula, we use a standard dissection technique. We partition the sum based on the index $k$ modulo 5. This is the natural choice because the term $\cos(2\pi k/5)$ is periodic with period 5. This dissection isolates the powers of $\phi^5$, revealing the structure required for a BBP formula.

We set $k = 5j+r$ for $j \ge 0$ and $r \in \{1, 2, 3, 4, 5\}$:
\begin{align*}
\mathcal{T} &= \sum_{j=0}^{\infty} \sum_{r=1}^{5} \frac{\cos(2\pi(5j+r)/5)}{\phi^{5j+r} (5j+r)^2} 
= \sum_{j=0}^{\infty} \frac{1}{\phi^{5j}} \sum_{r=1}^{5} \frac{\cos(2\pi r/5)}{\phi^r (5j+r)^2}
\end{align*}
The values of $\cos(2\pi r/5)$ are given in Table \ref{tab:cos_vals}.

\begin{table}[h!]
\centering
\caption{Values of $\cos(2\pi r/5)$ used in the dissection.}
\label{tab:cos_vals}
\begin{tabular}{@{}ccc@{}}
\toprule
$r$ & $2\pi r/5$ & $\cos(2\pi r/5)$ \\
\midrule
1 & $2\pi/5$ & $(\phi-1)/2 = \phi^{-1}/2$ \\
2 & $4\pi/5$ & $-(\phi)/2$ \\
3 & $6\pi/5$ & $-(\phi)/2$ \\
4 & $8\pi/5$ & $(\phi-1)/2 = \phi^{-1}/2$ \\
5 & $2\pi$ & $1$ \\
\bottomrule
\end{tabular}
\end{table}

Substituting these values into the inner sum gives:
\begin{align*}
\sum_{r=1}^{5} \frac{\cos(2\pi r/5)}{\phi^r (5j+r)^2} &= \frac{\phi^{-1}/2}{\phi^1(5j+1)^2} + \frac{-\phi/2}{\phi^2(5j+2)^2} + \frac{-\phi/2}{\phi^3(5j+3)^2} + \frac{\phi^{-1}/2}{\phi^4(5j+4)^2} + \frac{1}{\phi^5(5j+5)^2} \\
&= \frac{1}{2} \left( \frac{\phi^{-2}}{(5j+1)^2} - \frac{\phi^{-1}}{(5j+2)^2} - \frac{\phi^{-2}}{(5j+3)^2} + \frac{\phi^{-5}}{(5j+4)^2} + \frac{2\phi^{-5}}{(5j+5)^2} \right)
\end{align*}
This is exactly half the term in the BBP formula \eqref{eq:bbp_formula_main}. Let $\mathcal{S}$ be the sum in that formula. Then we have shown that $\mathcal{T} = \mathcal{S}/2$. Since we proved $\mathcal{T} = \pi^2/100$, it follows that $\mathcal{S} = 2\mathcal{T} = \pi^2/50$. This completes the proof of Theorem 1.

\begin{remark}
The final term in the BBP polynomial, corresponding to $r=5$, can be written as a series involving the dilogarithm. The full sum associated with this term is $\sum_{j=0}^\infty \frac{2\phi^{-5}}{\phi^{5j}(5j+5)^2} = \frac{2\phi^{-5}}{25} \sum_{j=0}^\infty \frac{(\phi^{-5})^j}{(j+1)^2} = \frac{2}{25}\mathrm{Li}_2(\phi^{-5})$. This highlights that every component of the formula is structurally a polylogarithm value.
\end{remark}

\section{Proof of the $\zeta(3)$ formula (theorem 2)}

\begin{lemma}
For any complex number $w$ such that $|w|<1$ and any integer $s \ge 1$, we have $\mathrm{Li}_s(w) = M_s(w)$.
\label{lem:equiv_M_Li}
\end{lemma}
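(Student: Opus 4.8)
The plan is to recognize $M_s(w)$ as the result of iterating a single self-similar functional equation for the polylogarithm, obtained by separating the summation index according to divisibility by $5$. This mirrors the base-$5$ dissection already used for $\pi^2$, but now applied recursively rather than once.

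First I would split the defining series $\mathrm{Li}_s(w) = \sum_{n=1}^{\infty} w^n/n^s$, which is absolutely convergent for $|w|<1$, into the terms with $5 \nmid n$ and those with $5 \mid n$. Writing $n = 5k+r$ with $r \in \{1,2,3,4\}$ in the first group and $n = 5m$ in the second, the first group is exactly
\[
P_s(w) := \sum_{r=1}^{4}\sum_{k=0}^{\infty} \frac{w^{5k+r}}{(5k+r)^s},
\]
while the second collapses via $\sum_{m \ge 1} w^{5m}/(5m)^s = 5^{-s}\,\mathrm{Li}_s(w^5)$. Absolute convergence legitimizes this rearrangement and yields the key functional equation $\mathrm{Li}_s(w) = P_s(w) + 5^{-s}\,\mathrm{Li}_s(w^5)$. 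Iterating it (or proving by induction on $J$) with $w$ replaced successively by $w, w^5, w^{25},\dots$ gives the telescoped finite form
\[
\mathrm{Li}_s(w) = \sum_{j=0}^{J-1} 5^{-sj}\, P_s\!\left(w^{5^j}\right) + 5^{-sJ}\,\mathrm{Li}_s\!\left(w^{5^J}\right),
\]
whose finite sum is precisely the $J$-th partial sum of $M_s(w)$, since $P_s(w^{5^j})$ is the inner double sum in the definition of $M_s$.

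The only genuinely analytic step is to show the remainder $R_J := 5^{-sJ}\,\mathrm{Li}_s(w^{5^J})$ vanishes as $J \to \infty$. Here I would use the elementary bound $|\mathrm{Li}_s(u)| \le \sum_{n\ge 1} |u|^n/n^s \le |u|/(1-|u|)$, valid for $|u|<1$, applied with $u = w^{5^J}$. Since $|w|<1$ forces $|w|^{5^J}\to 0$ super-exponentially while $5^{-sJ}\to 0$, the product obeys $|R_J| \le 5^{-sJ}\,|w|^{5^J}/(1-|w|^{5^J}) \to 0$. Passing to the limit $J \to \infty$ in the telescoped identity then gives $\mathrm{Li}_s(w) = M_s(w)$, and the same bound (applied termwise to $5^{-sj}P_s(w^{5^j})$) confirms that the series defining $M_s(w)$ converges absolutely, so all regroupings are justified. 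The main obstacle is purely bookkeeping: verifying that the initial rearrangement is permitted and that the remainder estimate exploits both decay factors; no delicate estimate beyond this is required.
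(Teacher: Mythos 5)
Your proof is correct. It reaches the same base-$5$ dissection as the paper, but organizes it differently: the paper re-indexes $\mathrm{Li}_s(w)=\sum_n w^n/n^s$ in a single step by writing each $n$ uniquely as $m\cdot 5^j$ with $5\nmid m$, whereas you derive the one-level functional equation $\mathrm{Li}_s(w) = P_s(w) + 5^{-s}\mathrm{Li}_s(w^5)$ and iterate it, closing the argument with an explicit remainder bound $|R_J| \le 5^{-sJ}|w|^{5^J}/(1-|w|^{5^J}) \to 0$. The two are logically equivalent --- your iteration is just the unique factorization by the $5$-adic valuation of $n$ unrolled one level at a time --- but each buys something slightly different. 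The paper's one-shot re-indexing is shorter and leans entirely on absolute convergence to justify the regrouping (a point it leaves implicit). Your version makes the convergence bookkeeping fully explicit and, as a bonus, yields a quantitative truncation estimate for the partial sums of $M_s(w)$, which is directly relevant to the paper's later remarks on the computational efficiency of the hierarchical series. One tiny nit: your bound $\sum_{n\ge 1}|u|^n/n^s \le |u|/(1-|u|)$ is fine for $s\ge 1$, but you could state even more simply that $|\mathrm{Li}_s(u)|$ is bounded for $|u|\le |w|<1$, which is all the remainder argument needs.
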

\begin{proof}
We begin with the definition of the polylogarithm, $\mathrm{Li}_s(w) = \sum_{n=1}^{\infty} \frac{w^n}{n^s}$. Any positive integer $n$ can be uniquely written as $n = m \cdot 5^j$, where $j \ge 0$ is an integer and $m$ is an integer not divisible by 5. We can thus re-index the sum by splitting it over $j$ and $m$:
\[ \mathrm{Li}_s(w) = \sum_{j=0}^{\infty} \sum_{\substack{m=1 \\ 5 \nmid m}}^{\infty} \frac{w^{m \cdot 5^j}}{(m \cdot 5^j)^s} = \sum_{j=0}^{\infty} \frac{1}{(5^s)^j} \sum_{\substack{m=1 \\ 5 \nmid m}}^{\infty} \frac{(w^{5^j})^m}{m^s} \]
The inner sum is over all integers $m$ not divisible by 5. This means $m$ can be written as $5k+r$ for $r \in \{1, 2, 3, 4\}$. The sum becomes:
\[ \sum_{\substack{m=1 \\ 5 \nmid m}}^{\infty} \frac{(w^{5^j})^m}{m^s} = \sum_{r=1}^4 \sum_{k=0}^\infty \frac{(w^{5^j})^{5k+r}}{(5k+r)^s} \]
Substituting this back into the expression for $\mathrm{Li}_s(w)$ gives exactly the definition of $M_s(w)$.
\end{proof}

\begin{lemma}[Key Polylogarithm Identities]
The proof relies on the following known evaluations of polylogarithm constants, found in sources such as \cite{Lewin81}.
\begin{enumerate}
    \item $\mathrm{Li}_3(\phi^{-2}) = \frac{4}{5}\zeta(3) + \frac{2}{3}(\ln\phi)^3 - \frac{2}{15}\pi^2\ln\phi$
    \item $\ln\phi = \mathrm{Li}_1(\phi^{-2})$
    \item $\pi^2 = 6\Big( \mathrm{Li}_2(\phi^{-1}) + \mathrm{Li}_2(\phi^{-2}) + 2(\ln\phi)^2 \Big)$
\end{enumerate}
Identity 3 is proven in Appendix A using standard dilogarithm evaluations (Eqs. \ref{eq:a1}-\ref{eq:a2}). For further details on all identities, see Appendix A.
\label{lem:polylog_ids}
\end{lemma}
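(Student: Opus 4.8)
The plan is to treat the three identities separately, in increasing order of difficulty, using throughout the elementary golden-ratio relations $\phi^{-1}+\phi^{-2}=1$, $\phi^2=\phi+1$, and $\ln(\phi^{-r})=-r\ln\phi$.

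Identity 2 is immediate. Since $\mathrm{Li}_1(w)=-\ln(1-w)$, and since $1-\phi^{-2}=\phi^{-1}$ is a direct consequence of $\phi^{-1}+\phi^{-2}=1$, we obtain $\mathrm{Li}_1(\phi^{-2})=-\ln(\phi^{-1})=\ln\phi$.

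Identity 3 follows in a few lines from the dilogarithm reflection formula \eqref{eq:landen} already recorded in the preliminaries. I would set $z=\phi^{-1}$, so that $1-z=\phi^{-2}$, and read off
\[ \mathrm{Li}_2(\phi^{-1})+\mathrm{Li}_2(\phi^{-2}) = \frac{\pi^2}{6} - \ln(\phi^{-1})\ln(\phi^{-2}). \]
Because $\ln(\phi^{-1})\ln(\phi^{-2}) = (-\ln\phi)(-2\ln\phi)=2\ln^2\phi$, rearranging gives exactly $\pi^2 = 6\big(\mathrm{Li}_2(\phi^{-1})+\mathrm{Li}_2(\phi^{-2})+2\ln^2\phi\big)$. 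Notably, no separate evaluation of the individual Landen dilogarithm values is required; the reflection formula delivers the needed combination directly.

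Identity 1 is the substantive part, and the main obstacle, precisely because the trilogarithm satisfies functional equations that were not among the stated preliminaries. I would invoke two standard trilogarithm identities, both specialized at $x=\phi^{-1}$: the duplication formula $\mathrm{Li}_3(x^2)=4\big(\mathrm{Li}_3(x)+\mathrm{Li}_3(-x)\big)$, and Landen's three-term relation
\[ \mathrm{Li}_3(x)+\mathrm{Li}_3(1-x)+\mathrm{Li}_3\!\left(\tfrac{x-1}{x}\right) = \zeta(3)+\tfrac16\ln^3 x+\tfrac{\pi^2}{6}\ln x-\tfrac12\ln^2 x\,\ln(1-x). \]
The decisive algebraic facts are that at $x=\phi^{-1}$ one has $1-x=\phi^{-2}$ and $\tfrac{x-1}{x}=1-\phi=-\phi^{-1}$, so the three arguments $\phi^{-1},\phi^{-2},-\phi^{-1}$ are all real and lie in the unit disk. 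This keeps both functional equations in their elementary real form with no branch-cut bookkeeping, and it means no inversion to arguments outside the disk is ever needed. Rewriting duplication as $\mathrm{Li}_3(-\phi^{-1})=\tfrac14\mathrm{Li}_3(\phi^{-2})-\mathrm{Li}_3(\phi^{-1})$ and substituting into Landen's relation causes the $\mathrm{Li}_3(\phi^{-1})$ contributions to cancel, leaving $\tfrac54\mathrm{Li}_3(\phi^{-2})$ on the left; collecting the elementary terms on the right then yields the claimed closed form $\tfrac45\zeta(3)+\tfrac23\ln^3\phi-\tfrac{2}{15}\pi^2\ln\phi$. The entire task thus reduces to citing (or briefly re-deriving) the two trilogarithm functional equations and executing this cancellation, and I expect the careful verification of the constants in Landen's relation to be the only genuinely delicate bookkeeping.
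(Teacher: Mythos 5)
Your proposal is correct, and for two of the three identities it takes a genuinely different --- and more self-contained --- route than the paper. For identity 2 you do exactly what Appendix A does. For identity 3, the paper instead adds the two individual closed forms $\mathrm{Li}_2(\phi^{-2})=\tfrac{\pi^2}{15}-\ln^2\phi$ and $\mathrm{Li}_2(\phi^{-1})=\tfrac{\pi^2}{10}-\ln^2\phi$ (Eqs.~\ref{eq:a1}--\ref{eq:a2}), which it cites as known and acknowledges require the five-term relation and inversion formula; your observation that the reflection formula \eqref{eq:landen} at $z=\phi^{-1}$ (using $1-\phi^{-1}=\phi^{-2}$) delivers the \emph{sum} directly is simpler and uses only an identity already recorded in the paper's preliminaries, at the price of not obtaining the individual values (which the paper does not need anyway). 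For identity 1 the divergence is starkest: the paper offers no proof at all, declaring the evaluation beyond its scope and deferring to \cite[Chapter 6]{Lewin81}, whereas you give an actual derivation from the duplication formula $\mathrm{Li}_3(x)+\mathrm{Li}_3(-x)=\tfrac14\mathrm{Li}_3(x^2)$ and Landen's three-term trilogarithm relation. I checked your computation: at $x=\phi^{-1}$ the three arguments are indeed $\phi^{-1},\phi^{-2},-\phi^{-1}$, the substitution $\mathrm{Li}_3(-\phi^{-1})=\tfrac14\mathrm{Li}_3(\phi^{-2})-\mathrm{Li}_3(\phi^{-1})$ cancels the $\mathrm{Li}_3(\phi^{-1})$ terms to leave $\tfrac54\mathrm{Li}_3(\phi^{-2})$, and with $\ln(\phi^{-1})=-\ln\phi$, $\ln(\phi^{-2})=-2\ln\phi$ the right side becomes $\zeta(3)+\tfrac56\ln^3\phi-\tfrac{\pi^2}{6}\ln\phi$, which yields exactly the stated closed form. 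Your approach buys a complete proof where the paper has only a citation; its cost is importing two trilogarithm functional equations not stated in the preliminaries, which you would need to cite or verify (the duplication formula follows in one line from the defining series, so the only genuinely external input is Landen's three-term relation).
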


\begin{proof}[Proof of theorem 2]
The proof follows a "bootstrapping" strategy. By Lemma \ref{lem:equiv_M_Li}, we know that $\mathrm{Li}_s(w) = M_s(w)$ for $|w|<1$, so we can use these expressions interchangeably in the following identities. We begin by rearranging the first identity in Lemma \ref{lem:polylog_ids} to isolate $\zeta(3)$:
\[ \zeta(3) = \frac{5}{4}\mathrm{Li}_3(\phi^{-2}) - \frac{5}{6}(\ln\phi)^3 + \frac{1}{6}\pi^2\ln\phi \]
Now, we substitute the expressions for $\ln\phi$ (identity 2) and $\pi^2$ (identity 3) into this equation.
\[ \zeta(3) = \frac{5}{4}\mathrm{Li}_3(\phi^{-2}) - \frac{5}{6}\big(\mathrm{Li}_1(\phi^{-2})\big)^3 + \frac{1}{6} \left[ 6\Big( \mathrm{Li}_2(\phi^{-1}) + \mathrm{Li}_2(\phi^{-2}) + 2(\mathrm{Li}_1(\phi^{-2}))^2 \Big) \right] \mathrm{Li}_1(\phi^{-2}) \]
Distributing the last term and combining the cubic terms yields the desired identity in terms of polylogarithms:
\[ \zeta(3) = \frac{5}{4}\mathrm{Li}_3(\phi^{-2}) + \Big(\mathrm{Li}_2(\phi^{-1}) + \mathrm{Li}_2(\phi^{-2})\Big)\mathrm{Li}_1(\phi^{-2}) + \frac{7}{6}\Big(\mathrm{Li}_1(\phi^{-2})\Big)^3 \]
To obtain the final formula stated in Theorem 2, we substitute each $\mathrm{Li}_s(w)$ term with its equivalent hierarchical series $M_s(w)$.
\end{proof}

\subsection*{Numerical illustration}
The efficiency of the formula for $\zeta(3)$ comes from the factor $5^{-sj}$ in the hierarchical sums $M_s(w)$. We know $\zeta(3) \approx 1.202056903159...$
\begin{itemize}
    \item The \textbf{j=0} term alone gives an approximation: $\zeta(3) \approx 1.202041...$
    \item Including the \textbf{j=1} term yields a much better result: $\zeta(3) \approx 1.202056902...$
    \item The \textbf{j=2} term adds a correction of order $10^{-33}$, yielding an accuracy of over 30 decimal places.\footnote{The high-order correction arises from the polynomial structure of the formula. While the correction from any single $M_s$ term is modest (e.g., the $j=2$ term in $M_3(\phi^{-2})$ is of order $10^{-15}$), the interplay between terms in the full polynomial for $\zeta(3)$ leads to significant cancellations, producing the much smaller final correction.}
\end{itemize}

\section{Conclusion and Perspectives}

We have presented a self-contained, geometrically-motivated proof of a BBP-type formula for $\pi^2$ in the golden ratio base. By building the proof from a specific identity tied to the regular pentagon, our approach offers an intuitive perspective on the formula's origin.

Furthermore, we have proved a new Machin-like identity for $\zeta(3)$. This work opens several avenues for reflection:
\begin{itemize}
    \item \textbf{Generalizations:} The success of this method is intimately tied to the specific geometric and algebraic properties of $\phi$ within the field of the fifth roots of unity. It seems unlikely that this approach could be generalized to other irrational algebraic numbers that do not possess such a remarkable geometric counterpart. The formula appears to be a "beautiful accident" of the pentagon's properties. One might wonder if analogous properties for other regular polygons (e.g., the octagon, related to $\sqrt{2}$) could yield similar formulas, but this remains a challenging open question.
    \item \textbf{Higher-order constants:} It is significant that our method yields a Machin-like formula for $\zeta(3)$ and not a pure BBP formula. While BBP formulas for $\zeta(3)$ in integer bases have been found \cite{BaileyCompendium}, the structure of polylogarithm identities related to $\phi$ seems to naturally produce the mixed terms characteristic of a Machin-like formula. This highlights a structural difference between representations in integer bases versus certain algebraic bases.
    \item \textbf{Computational aspects:} The formula for $\zeta(3)$ exhibits linear convergence. The number of correct digits obtained is roughly proportional to the number of terms computed in the outer sum ($j$). Specifically, each new term in $M_s(w)$ adds approximately $s\log_{10}(5)$ digits of precision. This is computationally effective for high-precision calculations, although its rate of linear convergence is lower than that of some known hypergeometric series for $\zeta(3)$. A detailed complexity analysis comparing this formula with other state-of-the-art algorithms/series would be a valuable next step.
\end{itemize}
On a pedagogical level, we hope this self-contained proof serves as an effective gateway for aspiring mathematicians, illustrating how a surprising numerical discovery can lead to the exploration of deep connections between analysis, number theory, and geometry.

\section*{Acknowledgements}
The author would like to thank Jonathan M. Borwein and Marc Chamberland for their interest in this formula. A special acknowledgement is dedicated to the memory of Jonathan M. Borwein, whose books and extensive online contributions have been a constant source of inspiration.

\begin{appendices}
\section{On the Polylogarithm Identities in Lemma \ref{lem:polylog_ids}}
To enhance the self-contained nature of this paper, we provide context for the three identities involving polylogarithm constants related to $\phi$. These are deep results from the theory of polylogarithms, with detailed proofs found in specialized literature such as Lewin's treatise \cite{Lewin81}.

\subsection{Identity 2: $\ln\phi = \mathrm{Li}_1(\phi^{-2})$}
This is the most straightforward identity. By definition, $\mathrm{Li}_1(z) = -\ln(1-z)$. We also have the algebraic property $\phi^{-2} = 1 - \phi^{-1}$.
Therefore:
\[ \mathrm{Li}_1(\phi^{-2}) = -\ln(1-\phi^{-2}) = -\ln(\phi^{-1}) = -(-\ln\phi) = \ln\phi \]

\subsection{Identity 3: $\pi^2 = 6\Big( \mathrm{Li}_2(\phi^{-1}) + \mathrm{Li}_2(\phi^{-2}) + 2(\ln\phi)^2 \Big)$}
This identity can be derived from two well-known evaluations for the golden ratio:
\begin{align}
\mathrm{Li}_2(\phi^{-2}) &= \frac{\pi^2}{15} - (\ln\phi)^2 \label{eq:a1} \\
\mathrm{Li}_2(\phi^{-1}) &= \frac{\pi^2}{10} - (\ln\phi)^2 \label{eq:a2}
\end{align}
Adding these two equations gives:
\[ \mathrm{Li}_2(\phi^{-1}) + \mathrm{Li}_2(\phi^{-2}) = \left(\frac{\pi^2}{15} + \frac{\pi^2}{10}\right) - 2(\ln\phi)^2 = \frac{2\pi^2+3\pi^2}{30} - 2(\ln\phi)^2 = \frac{5\pi^2}{30} - 2(\ln\phi)^2 = \frac{\pi^2}{6} - 2(\ln\phi)^2 \]
Rearranging this result yields the desired identity:
\[ \mathrm{Li}_2(\phi^{-1}) + \mathrm{Li}_2(\phi^{-2}) + 2(\ln\phi)^2 = \frac{\pi^2}{6} \implies \pi^2 = 6\Big( \mathrm{Li}_2(\phi^{-1}) + \mathrm{Li}_2(\phi^{-2}) + 2(\ln\phi)^2 \Big) \]
The proofs of \eqref{eq:a1} and \eqref{eq:a2} themselves rely on intricate manipulations of functional equations for the dilogarithm, such as the inversion formula and the five-term relation, evaluated at special points related to $\phi$.

\subsection{Identity 1: The $\mathrm{Li}_3(\phi^{-2})$ evaluation}
The evaluation of $\mathrm{Li}_3(\phi^{-2})$ in terms of $\zeta(3)$, $\pi^2$, and powers of $\ln\phi$ is a much deeper result. Its proof requires advanced techniques, including functional equations for the trilogarithm and connections to the theory of modular forms. A full derivation is well beyond the scope of this paper but can be found in advanced resources on polylogarithms \cite[Chapter 6]{Lewin81}.

\end{appendices}

\end{document}